\newcommand{\EEE}{\color{black}} 
\newcommand{\eps}{\varepsilon} 
\theoremstyle{plain}
\newtheorem{theorem}{Theorem}[section]
\newtheorem{lemma}[theorem]{Lemma}
\newtheorem{proposition}[theorem]{Proposition}
\numberwithin{equation}{section}
\newcommand{\R}{\mathbb{R}}
\theoremstyle{definition}
\newtheorem{remark}[theorem]{Remark}
\begin{document}

\title[A note on the Winterbottom shape]{A note on the Winterbottom shape}

\keywords{wetting, dewetting, Winterbottom shape, crystallization, interfacial energies, capillarity problems}

\author{Leonard Kreutz}
\address[Leonard Kreutz]{School of Computation, Information and Technology, Technical University of Munich\\
Boltzmannstra\ss e 3, 85748 Garching bei M\"unchen, Germany.}
\email{leonard.kreutz@tum.de}
%\urladdr{http://www.mat.univie.ac.at/~kreutz/}

\author{Bernd Schmidt}
\address[Bernd Schmidt]{Institut f{\"u}r Mathematik, Universit{\"a}t Augsburg, 
Universit{\"a}tsstr.\ 14, 86159 Augsburg, Germany.}
\email{bernd.schmidt@math.uni-augsburg.de}
%\urladdr{https://www.math.uni-augsburg.de/ana/schmidt.html}

\date{\today}  

\maketitle

\begin{abstract} In this short note we review results on equilibrium shapes of minimizers to the sessile drop problem. More precisely, we study the Winterbottom problem and prove that the Winterbottom shape is indeed optimal. The arguments presented here are based on relaxation and the (anisotropic) isoperimetric inequality.
\end{abstract}

\section{Introduction}

For solid crystals with sufficiently small grains, Herring \cite{Herring} claims that the bulk contribution to their configurational energy is negligible with respect to the surface tension. Thus, in this setting, when determining the equilibrium shape of a crystal, interfacial energies of the type
\begin{align*}%\label{intro-eq:perimeter}
P_{\varphi}(E) = \int_{\partial^*E}\varphi(\nu_E)\,\mathrm{d}\mathcal{H}^{d-1}\,
\end{align*}
play a fundamental role. Here, $\varphi \colon \mathbb{R}^d\to [0,+\infty)$ is a convex and positively homogeneous function of degree one that describes the possibly anisotropic surface tension of the crystal. The set $E$ is assumed to be sufficiently regular (i.e.~$E$ is a set of finite perimeter), $\partial^*E$ denotes its (measure theoretic) boundary, and $\nu_E(x)$ the outer normal to the set $E$ at the point $x \in \partial^*E$. The Wulff problem consists in studying solutions of
\begin{align}\label{intropb:Wulff}
E \in \mathrm{argmin} \left\{P_{\varphi}(E) \colon |E|=v\right\}\,,
\end{align}
where $v>0$ is given. This problem is an anistropic generalization of the classical isoperimetric problem. In the early 1900s Wulff \cite{Wulff} proposed a geometric construction to \eqref{intropb:Wulff} given by
\begin{align}\label{introeq:Wulffset}
W_\varphi = \{x \in \mathbb{R}^d \colon x\cdot\nu \leq  \varphi(\nu) \text{ for all } \nu \in \mathbb{S}^{d-1}\}\,.
\end{align}
This shape is nowadays known as the Wulff set (or Wulff crystal) of $\varphi$. Dinghas proved formally in \cite{Dinghas} that among convex polyhedra the Wulff set is the shape having the least surface integral for the volume it contains. The proof has later been rendered precise by Taylor \cite{Taylor74,Taylor75,Taylor} using arguments from geometric measure theory.

The Wulff variational problem provides a description of an equilibrium crystal shape deep inside a region in the gas phase. This leads to the natural question if, likewise, the shape of a crystal growing on a substrate can be determined that minimizes a suitable combination of its surface tension and the interaction energy with the substrate. Such a situation may be described as follows. For $\varphi$ be as above and $\lambda \in \mathbb{R}$ we set
\begin{align}\label{intro-eq:energy}
F_{\lambda,\varphi}(E) = \int_{\partial^* E\cap H^+} \varphi(\nu_E)\,\mathrm{d}\mathcal{H}^{d-1} + \lambda\mathcal{H}^{d-1}(\partial^* E\cap H)\,,
\end{align}
where $H^+=\{x \in \mathbb{R}^d \colon x_d >0\}$, $H=\{x \in \mathbb{R}^d \colon x_d=0\}$ and $E$ is a set of finite perimeter in $H^+$, i.e.\ $|E \setminus H^+| = 0$, which we simply denote by $E \subseteq H^+$,  cf.~Fig.~\ref{fig:configuration}. 
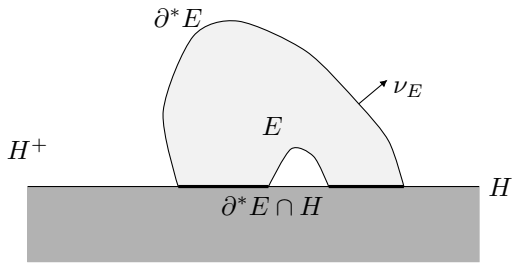
\begin{figure}[htp]
\begin{tikzpicture}
\tikzset{>={Latex[width=1mm,length=1mm]}};

\draw [fill=gray!10!white] plot [smooth] coordinates {(-1,0)(-1.2,1)(-.8,2)(-.2,2.2)(.7,1.8)(1.4,1.1)(1.8,.6)(2,0)};

\draw[fill=white] plot [smooth] coordinates {(.2,0)(.5,.5)(.8,.4)(1,0)};

\draw(-1,2) node [anchor=south]{$\partial^* E$};

\draw(.25,.8) node {$ E$};

\draw[->](1.4,1.1)--++(40:.5);

\draw(1.35,1.2)++(40:.5)node [anchor=north west]{$\nu_E$};

\draw [fill=gray!60!white,gray!60!white](-3,0)--++(6,0)--++(0,-1)--++(-6,0)--++(0,1);

\draw[very thick](-1,0)--(.2,0);
\draw[very thick](1,0)--(2,0);

\draw (-3,0)--++(6,0);

\draw(3,0) node[anchor=west]{$H$};

\draw(-3,.5) node{$H^+$};

\draw(.25,-.5) node[anchor=south] {$ \partial^* E \cap H$};

\end{tikzpicture}
\caption{A generic configuration admissible to \eqref{intropb:Winterbottom}. The substrate is illustrated in dark gray, the region occupied by the crystal is illustrated in light gray, and the contact surface is illustrated in bold.}
\label{fig:configuration}
\end{figure}
%\RRR Vermutlich waere es besser, $H^+=\{x \in \mathbb{R}^d \colon x_d \ge 0\}$ zu setzen, weil ja auch die Winterbottom shape nur im angeschlossenen oberen Halbraum liegt. \EEE 
Here, as in the Wulff problem, $\varphi$ represents the (anisotropic) surface tension density between crystal and vapor and $\lambda \in \mathbb{R}$ is the relative adhesion coefficient between the crystal and the substrate (i.e., the difference of the crystal-substrate and the substrate-vapor interfacial energy per unit surface area). In this setting, the Winterbottom problem \cite{Winterbottom} consists in studying the existence and finding  the solutions of 
\begin{align}\label{intropb:Winterbottom}
E \in \mathrm{argmin}\left\{F_{\lambda,\varphi}(E) \colon E \subseteq H^+, |E| =v \right\}\,,
\end{align}
where $v>0$. There are three interesting parameter regimes to consider (see Remark~\ref{rmk:drying-wetting}): 
\begin{itemize}
\item[(1)] If $\lambda \geq \varphi(-e_d)$ it is energetically inconvenient for the crystal to attach to the substrate and therefore the solution \eqref{intropb:Winterbottom} coincides (up to translation) with the solution of \eqref{intropb:Wulff}. This phenomenon is called complete drying.
\item[(2)] If $\lambda \in (-\varphi(e_d),\varphi(-e_d))$ a solution to the Winterbottom exists, but its shape differs from the Wulff shape given in \eqref{introeq:Wulffset}. In fact, the solution shape is now the Winterbottom shape given by
\begin{align}\label{introeq:WinterbottomShape}
W_{\lambda,\varphi} = W_\varphi \cap \{x\in \mathbb{R}^d \colon x_d \geq -\lambda\}
\end{align}
suitably rescaled and placed in order to be in contact with the substrate. In this case we speak of partial drying/wetting.
\item[(3)] If $\lambda \leq -\varphi(e_d)$ complete wetting occurs: It is energetically favorable to create large surface area in common with the substrate. This allows to create arbitrarily small energy (if $\lambda= -\varphi(e_d)$) or even energy diverging to $-\infty$ (if $\lambda< -\varphi(e_d)$).
\end{itemize}
We rigorously study all the cases. From the point of view of the mathematical analysis, the most interesting regime to consider is the partial wetting regime (2). There, our first main result Theorem \ref{thm:main} proves that indeed \eqref{introeq:WinterbottomShape} solves \eqref{intropb:Winterbottom}.

In the case that $\varphi$ is smooth the solution given in \eqref{introeq:WinterbottomShape} in particular recovers Young's law \cite{Young}. This law relates the contact angle of the boundary of equilibrium configurations with the adhesion coefficient $\lambda$. More precisely, if $\partial(\overline{E}\cap H)$ denotes the boundary of $\overline{E}\cap H$ in $H$, then  
\begin{align}\label{introeq:Young}
\nabla \varphi(\nu_E(x)) \cdot (-e_d) = \lambda\,, \quad \text{ for all } x \in \partial(\overline{E}\cap H)\,
\end{align}
 (see Remark~\ref{rmk:drying-wetting}\eqref{proof-YL} below).  In the isotropic case, i.e.~$\varphi(\nu) = |\nu|$, this reads (cf.~Fig.~\ref{fig:Young's law})
\begin{align*}
\nu_E(x) \cdot (-e_d) = \lambda \,, \quad \text{ for all } x \in \partial(\overline{E}\cap H)\,.
\end{align*}
The equilibrium condition \eqref{introeq:Young} has been derived  more generally  for anistropic capillarity problems in \cite{DePhilippisMaggi} and for epitaxially-strained thin films in \cite{DavoliPiovano}. 
\begin{figure}[htp]
\begin{tikzpicture}

\clip(4,-1)--++(-4,0)--++(0,2)--++(4,0)--++(0,-2);
\tikzset{>={Latex[width=1mm,length=1mm]}};

\draw [fill=gray!10!white](2,-1) arc (0:180:2);

\draw[->](1.725,0)--++(40:.5);

\draw(1.725,.2)++(40:.5)node [anchor=north west]{$\nu_E$};

\draw [fill=gray!60!white,gray!60!white](-3,0)--++(7,0)--++(0,-1)--++(-7,0)--++(0,1);

\draw[very thick](-1.725,0)--(1.725,0);

\draw (-3,0)--(1.725,0);

%\draw(3,0) node[anchor=west]{$H$};
%
%\draw(-3,.5) node{$H^+$};

\draw(1.725,0)++(270:.5)node [anchor= east]{$-e_d$};

\draw[->](1.725,0)--++(270:.5);

\draw[fill=black](1.725,0) circle(.05);

\draw(1.725,0) node[anchor=north west]{$x \in \partial(\overline{E}\cap H)$};

\end{tikzpicture}
\caption{Young's law for the contact angle. $\nu_E$ is the normal at the point $x \in \partial(\overline{E}\cap H)$ and $-e_d$ is the normal vector of $H$ pointing outwards with respect to the region the crystal may occupy.}
\label{fig:Young's law}
\end{figure}
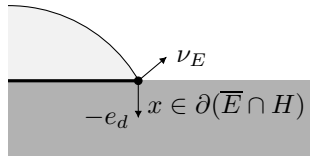

The proof that \eqref{introeq:WinterbottomShape} is the solution shape to \eqref{intropb:Winterbottom} provides a rigorous justification of an ansatz that has been previously considered in \cite{KoteckyPfister}. It essentially lies on the anistropic isoperimetric inequality \cite{EvansGariepy92} for the case $\lambda >0$ and change of coordinates in the case that $\lambda \leq 0$. In particular the quantitative isoperimetric inequality \cite{CicaleseLeonardi,FuscoMaggiPratelli,FigalliMaggiPratelli}, implies the stability of the Winterbottom shape. The stability of the Winterbottom shape has already been proved in two dimensions in \cite{KoteckyPfister} with the use of the (generalized) Bonnesen inequalities \cite{Bonnesen}. The stability of the Winterbottom shape in any dimension is the content of our second main Theorem~\ref{thm:main-quant}: For a set $E \subset H^+$ we show that its squared asymmetry index (i.e., the Lebesgue measure of the symmetric difference of $E$ with the optimally placed Winterbottom shape of equal volume) is controlled by the isoperimetric difference (the energy deficit of $E$ and the Winterbottom shape).

The paper is organized as follows. In Section \ref{sec:settings}, we introduce the problems, discuss the different parameter regimes, and state the main theorems. In Section \ref{sec:proofs}, we prove the main theorem. Here, we would like to point out that, due to the presence of substrate, we are working with discontinuous integrands in general and thus, some of the results already present for continuous integrands need to be proved again in our setting, cf.~Proposition~\ref{prop:basic-Wulff}.

\section{Settings and Main results}\label{sec:settings}

\subsection*{Notation and Preliminaries}
 
 For a measurable subset $B$ of $\mathbb{R}^d$ we denote by $|B|$ its $d$-dimensional Lebesgue measure and by $\mathcal{H}^k(B)$ its $k$-dimensional Hausdorff measure. Given $x,y \in \mathbb{R}^d$ we denote by $x \cdot y$ their scalar product and by $\|x\|$ the Euclidean norm of $x$. For $r > 0$, $x \in \mathbb{R}^d$ we write $B_r(x) = \{y \in \mathbb{R}^d \colon \|y-x\| < r\}$ and abbreviate $\mathbb{S}^{d-1} = \partial B_1(0)$. We also introduce the half space $H^+ =\{x \in \mathbb{R}^d \colon  x_d >0\}$ and the hyperplane $H=\{x \in \mathbb{R}^d \colon x_d=0\}$. 
%\MMM {\tt Folgendes in Magenta würde ich weglassen.} Given $A \subseteq \mathbb{R}^d$ open,  we define the set of nonnegative Radon measures on $A$ by $\mathcal{M}_+(A)$.  We denote by $BV(A)$ the space of functions of bounded variation in $A$ and we denote by $BV_{\mathrm{loc}}(A)=\{u\in L^1_{\mathrm{loc}}(A) \colon u \in BV(K) \text{ for all } K \subset\subseteq A, K \text{ open}\}$. Given a  function $u \in BV(A)$ we use the notation of \cite{AFP} for the jump set $J_u$  and the measure theoretic normal $\nu_u \colon J_u \to \mathbb{S}^{d-1}$. For $V\subseteq A$, $V\in \mathfrak{M}$ we denote the relative perimeter of $V$ in $A$ by
%\begin{align*}
%\mathrm{Per}(V,A) = \sup \left\{\int_V \mathrm{div}\, v\,\mathrm{d}x \colon v \in C_c^\infty(A;\mathbb{R}^d), \|v\|_\infty\leq 1\right\}\,.
%\end{align*}
%For convenience, we also write $\mathrm{Per}(V) = \mathrm{Per}(V,\R^d) $. 
 If $E \subset \mathbb{R}^d$ is a set of finite perimeter (i.e., its characteristic function is of bounded variation), we write $\partial^* E$ for the reduced boundary of $E$ and denote by $\nu_E \colon \partial^* E \to \mathbb{S}^{d-1}$ the generalized outer normal to $E$. We refer to \cite{AFP} for the definition of these objects and basic facts about sets of finite perimeter. In particular, if $K \subseteq  \mathbb{R}^d$ is a bounded convex set, then $K$ is of finite perimeter and $\nu_K(x) \in \mathcal{N}_K(x)$ for all $x \in \partial^* K \subseteq \partial K$, where, for each $x \in \partial K$, $\mathcal{N}_K(x):=\{\zeta \in \R^d \colon \zeta \cdot (x'-x) \le 0 ~\forall\, x' \in K\}$ denotes the normal cone to $K$ at $x$.

\subsection*{Energy}  Let $\varphi \colon \mathbb{R}^d\to [0,+\infty)$ be a convex and positively homogeneous function of degree one that is bounded from below, i.e. there exists $c>0$ such that
\begin{align}\label{ineq:bounded below}
\varphi(\nu) \geq c\, \|\nu\| \text{ for all } \nu \in \mathbb{R}^d\,.
\end{align}
For a set of finite perimeter $E \subseteq H^+$ we define $F_{\lambda,\varphi}(E)$ as in \eqref{intro-eq:energy}. Given $v>0$, we are interested in studying the shape of the solutions to \eqref{intropb:Winterbottom}. 
\begin{remark}[Scaling]\label{rem:scaling} It is obvious that $F_{\lambda,\varphi}(r E) = r^{d-1} F_{\lambda,\varphi}(E)$ for any $r > 0$ and, in particular, the minimal energy $m_{\lambda,\varphi}(v) = \inf\left\{F_{\lambda,\varphi}(E) \colon |E| = v \right\}$ satisfies    
\begin{align*}
m_{\lambda,\varphi}(v) = v^{\frac{d-1}{d}} m_{\lambda,\varphi}(1)\,.
\end{align*}
\end{remark}

\subsection*{Wulff shape} In order to construct solutions to \eqref{intropb:Winterbottom} we first define the \emph{Wulff set} of $\varphi$, yet in a more general set-up: For any positively $1$-homogeneous Borel function $\varphi \colon \mathbb{R}^d \to [0,+\infty)$ which is bounded from below (cf.\ \eqref{ineq:bounded below}) but not necessarily continuous or even convex, we set 
\begin{align*}%\label{def:wulffcrystal}
W_\varphi 
:= \{ x \in \mathbb{R}^d\colon\nu \cdot x \leq \varphi(\nu) ~\forall\, \nu \in \mathbb{S}^{n-1}\}\,.
\end{align*}

%\begin{align*}%\label{def:wulffcrystal}
%W_\varphi := \{ x \in \mathbb{R}^d\colon\varphi^\circ(x)\leq 1\}\,,
%\end{align*}
%where $\varphi^\circ \colon \mathbb{R}^d \to [0,+\infty)$ is defined as
%\begin{align*}
%\varphi^\circ(x) = \sup_{\nu \in \mathbb{S}^{n-1}} \frac{\nu \cdot x}{\varphi(\nu)}\,.
%\end{align*}  
In fact, for continuous $\varphi$, $W_\varphi $ is the -- up to translations unique -- minimizer among sets with equal volume to the problem $E \mapsto \int_{\partial^* E} \varphi(\nu_E)\,\mathrm{d}\mathcal{H}^{d-1}$ without substrate, cf.\ \cite{Taylor,Fonseca,FonsecaMueller}. 
We recall some basic facts on Wulff sets. 
\begin{proposition}\label{prop:basic-Wulff}  \
\begin{enumerate}
 \item\label{eq-phi-elementary} $W_\varphi$ is a bounded, convex and closed set with $0 \in \operatorname{int} W_\varphi$. 
 \item\label{eq-phi-conj} The convex conjugate $\varphi^*$ is equal to the indicator function of $W_\varphi$, i.e., $\varphi^*(x) = 0$ if $x \in W_\varphi$ and $\varphi^*(x) = \infty$ if $x \notin W_\varphi$. 
 \item\label{char-normal-cone} For $x \in \partial W_\varphi$ we have that $\mathcal{N}_{W_\varphi}(x)=\partial \varphi^*(x)$, where $\partial \varphi^*(x)$  is the subgradient of $\varphi^*$ at $x$. 
 \item\label{W-conv-env} The Wulff shapes of $\varphi$ and its convex envelope $\varphi^{**}$ coincide: $W_\varphi = W_{\varphi^{**}}$. 
 \item\label{eq-normal-cone} If $x \in \partial W_\varphi$ and $\zeta$ lies in the the normal cone to $W_\varphi$ at $x$, then $x \cdot \zeta = \varphi^{**}(\zeta)$. 
%\GGG  \item\label{eq-reduced-bdry} $W_\varphi$ is a set of finite perimeter and for all $x \in \partial^* W_\varphi$ we have that $\mathcal{N}_{W_\varphi}(x)=\{\nu_{W_\varphi}(x)\}$, where we recall that $\nu_{W_\varphi}(x)$ is the generalized outer unit normal to $W_\varphi$ at $x$.  \EEE
\end{enumerate}
\end{proposition}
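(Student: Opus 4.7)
The natural route is to take the convex conjugate $\varphi^*$ as the central object and read off all five items from it; the only real subtlety is that $\varphi$ is assumed merely Borel and is not assumed bounded above on $\mathbb{S}^{d-1}$.

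I would first prove (2). By definition $\varphi^*(x) = \sup_{\nu \in \mathbb{R}^d}(x\cdot\nu - \varphi(\nu))$; taking $\nu = 0$ (with $\varphi(0) = 0$ from $1$-homogeneity) gives $\varphi^*(x) \geq 0$. For $x \in W_\varphi$ the inequality $x\cdot\nu \leq \varphi(\nu)$ on $\mathbb{S}^{d-1}$ extends to all of $\mathbb{R}^d$ by positive $1$-homogeneity, so $\varphi^*(x) = 0$. For $x \notin W_\varphi$, pick $\nu_0 \in \mathbb{S}^{d-1}$ with $x\cdot\nu_0 > \varphi(\nu_0)$ and substitute $t\nu_0$: the difference scales as $t(x\cdot\nu_0 - \varphi(\nu_0)) \to +\infty$, so $\varphi^*(x) = +\infty$. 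From (2) most of (1) is immediate: $\varphi^*$ is always convex and lower semicontinuous, so $W_\varphi = \{\varphi^* \leq 0\}$ is closed and convex. The inclusion $B_c(0) \subseteq W_\varphi$, and hence $0 \in \operatorname{int} W_\varphi$, follows from Cauchy--Schwarz together with $\varphi(\nu) \geq c\|\nu\|$.

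For (4) I would argue directly: $W_{\varphi^{**}} \subseteq W_\varphi$ since $\varphi^{**} \leq \varphi$, and conversely if $x \in W_\varphi$ then the linear (hence convex) function $\nu \mapsto x\cdot\nu$ lies below $\varphi$ on all of $\mathbb{R}^d$, hence below its largest convex minorant $\varphi^{**}$, giving $x \in W_{\varphi^{**}}$. Boundedness in (1) is the step I expect to be the main obstacle, because $\varphi$ is not assumed bounded above on $\mathbb{S}^{d-1}$ and no continuity argument on $\varphi$ itself is available; I would resolve it via (4), observing that $\varphi^{**} \leq \varphi < +\infty$ is a convex function that is finite on all of $\mathbb{R}^d$, hence continuous, and in particular bounded on $\mathbb{S}^{d-1}$ by some constant $C$. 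For $x \in W_\varphi = W_{\varphi^{**}}$ the test $\nu = x/\|x\|$ then yields $\|x\| \leq \varphi^{**}(\nu) \leq C$.

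Finally (3) and (5) follow from (2) by standard convex analysis. For $x \in W_\varphi$, the condition $\zeta \in \partial \varphi^*(x)$ reads $\varphi^*(y) \geq \varphi^*(x) + \zeta \cdot (y - x)$ for all $y$; because $\varphi^* = 0$ on $W_\varphi$ and $+\infty$ off $W_\varphi$, this simplifies to $\zeta \cdot (y - x) \leq 0$ for every $y \in W_\varphi$, which is precisely $\zeta \in \mathcal{N}_{W_\varphi}(x)$, giving (3). For (5), the Fenchel--Young equality for the proper lower semicontinuous convex function $\varphi^*$ gives the equivalence $\zeta \in \partial\varphi^*(x) \iff \varphi^*(x) + \varphi^{**}(\zeta) = x\cdot\zeta$, which together with $\varphi^*(x) = 0$ reduces to $\varphi^{**}(\zeta) = x\cdot\zeta$.
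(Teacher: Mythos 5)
Your proof is correct, and it is more self-contained than the paper's: the paper proves item (1) by elementary direct observations and then simply cites Fonseca (Propositions 3.4 and 3.5) for items (2)--(4), with the remark that the continuity assumed there is not actually needed, before deducing (5) from (2), (3) and the Fenchel--Young identity exactly as you do. Your route -- establishing (2) first by the homogeneity/scaling argument, reading off closedness, convexity and $0 \in \operatorname{int} W_\varphi$ from $W_\varphi = \{\varphi^* \le 0\}$, proving (4) by comparing $\varphi$ with its closed convex envelope, and identifying $\partial\varphi^*(x)$ with the normal cone by inspecting the subgradient inequality -- replaces the external citation with a short, complete convex-analysis argument, which is a genuine gain given that the whole point of the paper's remark is that Fonseca's hypotheses must be re-checked for discontinuous $\varphi$. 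One place where you work harder than necessary is boundedness in (1): you route it through (4) and the continuity of the finite convex function $\varphi^{**}$, which is fine (and non-circular, since your proof of (4) does not use boundedness), but the paper's observation is simpler and worth knowing: since $\varphi$ is finite-valued, testing the defining inequality of $W_\varphi$ with the $2d$ directions $\pm e_k$ already gives $-\varphi(-e_k) \le x_k \le \varphi(e_k)$ for every $x \in W_\varphi$, so no boundedness of $\varphi$ on all of $\mathbb{S}^{d-1}$ is needed. A cosmetic point: $\varphi^{**}$ is the largest \emph{lower semicontinuous} convex minorant of $\varphi$, not merely the largest convex minorant; your argument is unaffected since the linear functions you compare with are closed and convex.
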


\begin{proof} Convexity and closedness of $W_\varphi$ and $0 \in B_c \subseteq W_\varphi$ for $c>0$ given by \eqref{ineq:bounded below} are immediate. Boundedness follows in our set-up from $-\varphi(-e_k) \leq x_k \leq \varphi(e_k)$ for $k=1,\ldots,n$ for any $x \in W_\varphi$. This proves \eqref{eq-phi-elementary}. %\GGG We give some alternate proof of \eqref{eq-phi-elementary}, that does not use the continuity of $\varphi$. The convexity of $W_\varphi$ is clear by its definition. In order to prove that $W_\varphi$ is bounded, note that 
%\begin{align*}
%W_\varphi \subseteq \{x \in \mathbb{R}^d \colon x_k \leq C_\varphi\}\,, \quad \text{where } C_\varphi:=\sup_{k\in \{1,\ldots,d\}}(\varphi(e_k)\vee \varphi(-e_k))\,.
%\end{align*} 
%As $\varphi \colon \mathbb{R}^d \to [0,+\infty)$ is real-valued we have that $C_\varphi$ is finite and thus $W_\varphi$ is bounded.
%In order to show that $0 \in \operatorname{int} W_\varphi$, we show the stronger statement $B_c \subseteq W_\varphi$, where $c>0$ is given by \eqref{ineq:bounded below}. To this end let $x \in B_c$ and $\nu \in \mathbb{S}^{d-1}$. Using \eqref{ineq:bounded below}, we have
%\begin{align*}
%x\cdot\nu \leq \|x\| \leq c = c\|\nu\| \leq \varphi(\nu)\,,
%\end{align*}
%which implies that $x \in W_\varphi$ and shows the claim. \GGG 
Now, \eqref{eq-phi-conj}--\eqref{eq-normal-cone} \EEE are found in \cite[Propositions 3.4 \& 3.5]{Fonseca}. A careful insepection of the proofs in \cite{Fonseca} reveals that the relevant assertions do not need continuity of $\varphi$ (which is assumed there). The statement in \eqref{eq-normal-cone} follows since, by \eqref{char-normal-cone}, $\zeta \in \partial \varphi^*(x)$, whence the Fenchel-Young identity implies $x \cdot \zeta = \varphi^*(x) + \varphi^{**}(\zeta)$ and so   $x \cdot \zeta = \varphi^{**}(\zeta)$ by \eqref{eq-phi-conj}. 
%\GGG In order to prove \eqref{eq-reduced-bdry}, as $W_\varphi$ minimizes $E \mapsto \int_{\partial^* E} \varphi(\nu_E)\,\mathrm{d}\mathcal{H}^{d-1}$, let $Q=(0,1)^d$ and observe that
%\begin{align*}
%c\,\mathcal{H}^{d-1}(\partial^* W_\varphi)\leq  \int_{\partial^* W_\varphi} \varphi(\nu_{W_\varphi})\,\mathrm{d}\mathcal{H}^{d-1} \leq  \int_{\partial^* Q} \varphi(\nu_{Q})\,\mathrm{d}\mathcal{H}^{d-1} \leq C_\varphi \mathcal{H}^{d-1}(\partial^* Q)\,.
%\end{align*}
%Hence, $W_\varphi$ is a set of finite perimeter. As $\partial^* W_\varphi \subseteq \partial W_\varphi$ we have that $\mathcal{N}_{W_\varphi}(x)\neq \emptyset$ for all $x \in \partial^* W_\varphi $. It remains to prove $\mathcal{N}_{W_\varphi}(x)=\{\nu_{W_\varphi}(x)\}$. To this end, denote $\nu_{W_\varphi}(x)$ is the generalized outer unit normal and let $p \in\mathcal{N}_{W_\varphi}(x)$. We prove $p =\nu_{W_\varphi}(x)$.  In fact, if this were not true then $ W_\varphi \subseteq \{ x' \in \mathbb{R}^d \colon p \cdot (x'-x) \leq 0\}$ and, by definition of generalized outer unit normal, there exists $C(\nu_{W_\varphi}(x) ,p)>0$ such that
%\begin{align*}
%0 &= \lim_{r\to 0} \frac{|B_r(x) \cap \{x'\in \mathbb{R}^d \colon \nu_{W_\varphi}(x) \cdot (x'-x) \leq  0\} \setminus W_\varphi|   }{|B_r(x)|} \\&\geq \lim_{r\to 0} \frac{|B_r(x) \cap \{x'\in \mathbb{R}^d \colon \nu_{W_\varphi}(x) \cdot (x'-x) \leq  0\} \setminus \{ x' \in \mathbb{R}^d \colon p \cdot (x'-x) \leq 0\}|   }{|B_r(x)|} \geq C(\nu_{W_\varphi}(x) ,p)\,.
%\end{align*}
%This is a contradiction and concludes the proof.
%\EEE
\end{proof}

\subsection*{Winterbottom shape} 
We now return to our convex, positively $1$-homogeneous and lower bounded $\varphi$ and set 
\begin{align*}%\label{def:Winterbottomcrystal}
W_{\lambda,\varphi} = W_\varphi \cap \{x\in \mathbb{R}^d \colon x_d \geq -\lambda\}\,.
\end{align*}
 If $|W_{\lambda,\varphi}| > 0$, the \emph{Winterbottom set} with volume $v>0$ is then defined as
\begin{align}\label{def:Winterbottomshape-volume}
W_{\lambda,\varphi}(v) = \left(\frac{v}{|W_{\lambda,\varphi}|}\right)^{\frac{1}{d}} \left(W_{\lambda,\varphi} +\lambda e_d\right)\,.
\end{align}

\subsection*{Minimizers} We are describing quickly the different regimes for $\lambda$:
\begin{enumerate}
\item \emph{Complete drying:}  $\lambda \geq \varphi(-e_d)$. In this case $W_{\lambda,\varphi} = W_{\varphi}$. In Remark~\ref{rmk:drying-wetting} below we will see that minimizers to $F_{\lambda,\varphi}$ for a given volume are those of the unconstrained system without substrate. 

\item \emph{Partial drying/wetting:} $ \lambda \in \left(- \varphi(e_d), \varphi(-e_d)\right)$. %Since $e_d$ and $-e_d$ are contained in the normal cone of a point with maximal, respectively, minimal last component in $\partial W_\varphi$, 
 Here we have  $W_{\lambda,\varphi} \subsetneq W_{\varphi}$ as shown in  Lemma~\ref{lem:auxiliary} below. Minimality of the Winterbottom shape will be established in Theorem~\ref{thm:main} below.

\item \emph{Complete wetting:} $\lambda \leq  - \varphi(e_d)$.  Here $W_{\lambda,\varphi} = \emptyset$. In Remark~\ref{rmk:drying-wetting} below we will see that solutions to \eqref{intropb:Winterbottom} do not exist if $\lambda < - \varphi(e_d)$ as ``wetting'' configurations that intersect large parts of $H$ have arbitrarily small energy. Generically there are no minimizers in the special case $\lambda = - \varphi(e_d)$ either, while here for particular $\varphi$ (even non-unique) minimizers might exist.   
\end{enumerate}

The first main theorem is the following characterization of minimizers in the partial drying/wetting regime.

\begin{theorem}\label{thm:main} Let $ \lambda \in \left(- \varphi(e_d), \varphi(-e_d)\right)$ and $v >0$. Then
\begin{align*}
F_{\lambda,\varphi} (W_{\lambda,\varphi}(v)) \leq F_{\lambda,\varphi}(E)
\end{align*}
for all $E  \subseteq H^+$ sets of finite perimeter such that $|E|=v$. Moreover, equality holds if and only if there exists $\tau \in H$ such that $|E\Delta \left(W_{\lambda,\varphi}(v) +\tau\right)| =0$.
\end{theorem}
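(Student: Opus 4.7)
My strategy is to reduce the problem to the classical anisotropic isoperimetric inequality by introducing the support function of the Winterbottom set as an auxiliary anisotropy that already encodes the substrate. Set $K := W_{\lambda,\varphi}$; in the partial wetting regime, Lemma~\ref{lem:auxiliary} guarantees that $K$ is a compact convex set with non-empty interior. Let $h_K(\nu) := \sup_{x \in K} x \cdot \nu$ denote its support function, a convex, positively $1$-homogeneous function. Since $\varphi$ is convex and positively $1$-homogeneous, one has $\varphi = h_{W_\varphi}$, so the inclusion $K \subseteq W_\varphi$ gives $h_K \leq \varphi$ pointwise on $\mathbb{R}^d$, while $\inf_{x \in K} x_d = -\lambda$ gives $h_K(-e_d) = \lambda$.

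For any admissible $E \subseteq H^+$ with $|E| = v$, the outer normal on $\partial^*E \cap H$ equals $-e_d$ for $\mathcal{H}^{d-1}$-a.e.\ point, so these two inequalities combine to
\begin{align*}
F_{\lambda,\varphi}(E) \geq \int_{\partial^*E \cap H^+} h_K(\nu_E)\,\mathrm{d}\mathcal{H}^{d-1} + h_K(-e_d)\,\mathcal{H}^{d-1}(\partial^*E \cap H) = \int_{\partial^*E} h_K(\nu_E)\,\mathrm{d}\mathcal{H}^{d-1}.
\end{align*}
Before invoking the anisotropic isoperimetric inequality I would translate $K$ to $K - x_0$ with $x_0 \in \operatorname{int} K$ so that the origin becomes interior (relevant only when $\lambda \leq 0$); this leaves the right-hand side unchanged because $\int_{\partial^*E} \nu_E\,\mathrm{d}\mathcal{H}^{d-1} = 0$ by the divergence theorem. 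The classical inequality then yields $\int_{\partial^*E} h_K(\nu_E)\,\mathrm{d}\mathcal{H}^{d-1} \geq d|K|^{1/d} v^{(d-1)/d}$. A direct divergence-theorem computation on $W_{\lambda,\varphi}$---splitting $\partial W_{\lambda,\varphi}$ into its curved part (contained in $\partial W_\varphi$, where $x \cdot \nu_{W_\varphi} = \varphi(\nu_{W_\varphi})$ by Proposition~\ref{prop:basic-Wulff}\eqref{eq-normal-cone}) and its flat bottom face $W_\varphi \cap \{x_d = -\lambda\}$ (where $\nu = -e_d$ and $x \cdot \nu = \lambda$)---combined with the scaling relation in Remark~\ref{rem:scaling} identifies this lower bound with $F_{\lambda,\varphi}(W_{\lambda,\varphi}(v))$.

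For the equality case, rigidity in the anisotropic isoperimetric inequality forces $E = rK + x_0$ up to a null set, with $r = (v/|K|)^{1/d}$ and some $x_0 \in \mathbb{R}^d$; the admissibility $E \subseteq H^+$ then gives $x_{0,d} \geq r\lambda$. If this inequality were strict, then the translated flat bottom face of $rK$ would lie strictly inside $H^+$ with outer normal $-e_d$ and positive $\mathcal{H}^{d-1}$-measure (non-degenerate precisely because $\lambda < \varphi(-e_d)$); on this face, $\varphi(-e_d) > \lambda = h_K(-e_d)$ yields a strict improvement in the first displayed inequality, contradicting equality. Hence $x_{0,d} = r\lambda$, and $\tau := x_0 - r\lambda e_d \in H$ yields the desired form $E = W_{\lambda,\varphi}(v) + \tau$. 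The main obstacle I anticipate is combining the two rigidity conditions---the pointwise saturation $h_K = \varphi$ on $\partial^*E \cap H^+$ and the isoperimetric rigidity---to pin down the vertical position of $E$, relying crucially on the partial wetting hypothesis to ensure that the flat face of $K$ has positive $(d-1)$-measure.
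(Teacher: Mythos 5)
Your proposal is correct and follows essentially the same route as the paper: your support function $h_K$ of $K=W_{\lambda,\varphi}$ is exactly the paper's relaxed integrand $\varphi_\lambda^{**}$, the inequality $F_{\lambda,\varphi}(E)\ge \int_{\partial^*E}h_K(\nu_E)\,\mathrm{d}\mathcal{H}^{d-1}$ packages the paper's Lemmas on $P_{\varphi_\lambda}$ and its convexification, the translation of $K$ by $x_0\in\operatorname{int}K$ combined with Gauss--Green is the paper's change-of-coordinates lemma handling $\lambda\le 0$, and both arguments then invoke the classical Wulff theorem with its uniqueness statement, settling the equality case by the positive $\mathcal{H}^{d-1}$-measure of the flat face with normal $-e_d$. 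The differences are only organizational (you fold the auxiliary discontinuous integrand $\varphi_\lambda$ and the $\lambda'>0$ reduction into one step), not mathematical.
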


\begin{remark}\label{rmk:drying-wetting} 
\begin{enumerate}
\item \emph{Complete drying:}  $\lambda \geq \varphi(-e_d)$.  Comparison to the unconstrained case shows that $E$ is a solution to \eqref{intropb:Winterbottom} if and only if  $|E\Delta \left(W_{\lambda,\varphi}(v) +\tau\right)| =0$ (recall definition \eqref{def:Winterbottomshape-volume} of $W_{\lambda,\varphi}(v)$) for some $\tau \in H^+$ (if $\lambda > \varphi(-e_d)$ and $\mathcal{H}^{d-1}(\{x \in W_\varphi : x_d = - \varphi(-e_d) \}) > 0$), respectively, $\tau \in H^+\cup H$ (if $\lambda = \varphi(-e_d)$ or $\mathcal{H}^{d-1}(\{x \in W_\varphi : x_d = - \varphi(-e_d) \}) = 0$).

\item\label{positive-m} \emph{Complete drying and partial drying/partial wetting:} $\lambda >  - \varphi(e_d)$.  For every set of finite perimeter $E \subseteq H^+$ one has 
\begin{align}\label{eq:Gauss-Green} 
   \int_{\partial^* E \cap H^+} \varphi(\nu_E)\,\mathrm{d}\mathcal{H}^{d-1} 
   \ge \varphi \Big( \int_{\partial^* E \cap H^+} \nu_E\,\mathrm{d}\mathcal{H}^{d-1} \Big) 
   =  \mathcal{H}^{d-1}(\partial^* E \cap H) \varphi(e_d)\,, 
\end{align}
where we have used Jensen's inequality, the homogeneity of $\varphi$ and the fact that, by the Gauss-Green Theorem for sets of finite perimeter, $\int_{\partial^* E} \nu_E\,\mathrm{d}\mathcal{H}^{d-1} = 0$. This shows that $m_{\lambda,\varphi}$ (cf.\ Remark~\ref{rem:scaling}) is positive if $\lambda >  - \varphi(e_d)$.  
\EEE 

\item \emph{Complete wetting:} $\lambda \leq  - \varphi(e_d)$.  In case $\lambda <  - \varphi(e_d)$ one may consider cylindrical sets $E_R = (0,R)^{d-1} \times (0,v/R^{d-1})$ with $R \to \infty$ to see that $m_{\lambda,\varphi} = - \infty$. In particular, solutions to \eqref{intropb:Winterbottom} do not exist. In case $\lambda = - \varphi(e_d)$ the trial configurations $E_R$ show that $m_{\lambda,\varphi} \le 0$. Together with \eqref{eq:Gauss-Green} this implies $m_{\lambda,\varphi} = 0$. However, in this case both existence and non-existence of minimizers might occur: If, e.g., $\varphi$ is strictly convex, the above argument shows that $F_{\lambda,\varphi}(E) > 0$ for $|E| > 0$. If, by way of contrast, $\varphi$ is affine near $e_d$ the above computation shows that for a spherical cap $C_\varepsilon = \{x \in B_1(0) : x_d \ge 1-\varepsilon\}$, $0 < \varepsilon \ll 1$, the set $E = \frac{v}{|C_\varepsilon|}(C_\varepsilon - (1 - \varepsilon) e_d)$ has $F_{\lambda,\varphi}(E) = 0$. 

\item\label{proof-YL} \emph{Young's law:} The relation \eqref{introeq:Young} can be seen as follows. Let $x \in \partial W_{\lambda,\varphi}$ with $x_d = - \lambda$ and $\nu(x) \in \mathcal{N}_{W_\varphi}(x)$. Then Proposition~\ref {prop:basic-Wulff}\eqref{char-normal-cone} and the Fenchel-Young identity imply $x \in \partial \varphi^{**}(\nu(x)) =  \partial \varphi(\nu(x))$. For smooth $\varphi$ this shows $\nabla \varphi(\nu(x)) = x$ and, hence,
\begin{align*}
\nabla \varphi(\nu(x)) \cdot (-e_d) = -x_d = \lambda\,.
\end{align*}
\end{enumerate}
\end{remark}

Furthermore, we are able to establish the stability of the Winterbottom shape improving Theorem~\ref{thm:main}. This is formulated in our second main theorem.

\begin{theorem}\label{thm:main-quant} Let $ \lambda \in \left(- \varphi(e_d), \varphi(-e_d)\right)$ and $v >0$. There is a constant $C > 0$ such that for every set $E \subseteq H^+$ of finite perimeter with $|E|=v$ there exists $\tau \in H$ that verifies 
\begin{align*}
\frac{| E \triangle (W_{\lambda,\varphi}(v)+ \tau) |^2}{v^2} \le C \left(\frac{ F_{\lambda,\varphi}(E) - F_{\lambda,\varphi}(W_{\lambda,\varphi}(v))}{F_{\lambda,\varphi}(W_{\lambda,\varphi}(v))} \right)\,.
\end{align*}
\end{theorem}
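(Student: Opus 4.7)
The plan is to reduce Theorem~\ref{thm:main-quant} to the quantitative anisotropic isoperimetric inequality of \cite{FigalliMaggiPratelli}, applied to a suitable auxiliary convex and positively $1$-homogeneous integrand $\tilde\varphi$ on $\R^d$ whose Wulff set is, up to translation, the Winterbottom shape $W_{\lambda,\varphi}$. The same construction will underlie the proof of Theorem~\ref{thm:main}; here the additional work is to transfer the resulting $L^1$-asymmetry bound from translations in $\R^d$ to translations in $H$.

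For the construction of $\tilde\varphi$, pick $c\in(-\varphi(e_d),\lambda)$---a nonempty interval by the assumption $\lambda>-\varphi(e_d)$---and let $\tilde\varphi$ be the support function of the translate $W_{\lambda,\varphi}+ce_d$. The choice of $c$ guarantees $0\in\operatorname{int}(W_{\lambda,\varphi}+ce_d)$, so $\tilde\varphi$ is a genuine norm with Wulff set $W_{\tilde\varphi}=W_{\lambda,\varphi}+ce_d$. The inclusion $W_{\lambda,\varphi}\subseteq W_\varphi$, combined with Proposition~\ref{prop:basic-Wulff}, gives $\tilde\varphi(\nu)\le\varphi(\nu)+c\nu_d$ for every $\nu\in\R^d$, while $\tilde\varphi(-e_d)=\lambda-c$ by direct computation. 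Since $\int_{\partial^*E}\nu_E\,\mathrm{d}\mathcal{H}^{d-1}=0$ by the Gauss-Green theorem, the linear term $c\nu_d$ integrates to zero over $\partial^*E$, and a short calculation yields
\begin{align*}
P_{\tilde\varphi}(E)\le F_{\lambda,\varphi}(E)\qquad\text{for every set of finite perimeter } E\subseteq H^+,
\end{align*}
with equality when $E$ is a horizontal translate of $W_{\lambda,\varphi}(v)$. The quantitative anisotropic isoperimetric inequality applied to $\tilde\varphi$, together with the translation invariance of $P_{\tilde\varphi}$, then gives
\begin{align*}
F_{\lambda,\varphi}(E)-F_{\lambda,\varphi}(W_{\lambda,\varphi}(v))
\ge P_{\tilde\varphi}(E)-P_{\tilde\varphi}(W_{\tilde\varphi}(v))
\ge C\,v^{\frac{d-1}{d}}\min_{\tau\in\R^d}\Bigl(\tfrac{|E\triangle(W_{\lambda,\varphi}(v)+\tau)|}{v}\Bigr)^{\!2}.
\end{align*}

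The remaining and main technical step is to pass from this unrestricted minimum to a minimum over $\tau\in H$ at the cost of a multiplicative constant. Writing an optimal $\tau^*\in\R^d$ as $\tau^*=\tau'+se_d$ with $\tau'\in H$, I distinguish two regimes. If $s\le 0$, the translated Winterbottom shape $W_{\lambda,\varphi}(v)+\tau^*$ intrudes into $\{x_d<0\}$; because $E\subseteq H^+$ the submerged portion is entirely counted by the symmetric difference, and the flat bottom face of $W_{\lambda,\varphi}(v)$ of positive $(d-1)$-dimensional area forces $|s|\le C|E\triangle(W_{\lambda,\varphi}(v)+\tau^*)|$ whenever $|s|$ is small. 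The triangle inequality for $L^1$ then completes the reduction with $\tau=\tau'$. If $s>0$, the shape floats above $H$ and a direct computation gives $F_{\lambda,\varphi}(W_{\lambda,\varphi}(v)+\tau^*)-F_{\lambda,\varphi}(W_{\lambda,\varphi}(v))=(\varphi(-e_d)-\lambda)\,\mathcal{H}^{d-1}(\text{bottom face})>0$, a strictly positive quantity depending only on $v$, $\lambda$, and $\varphi$. By the $L^1$-lower semicontinuity of $F_{\lambda,\varphi}$, a sufficiently small asymmetry propagates this positive lower bound to the deficit, so that the claimed inequality holds trivially after enlarging $C$---using that the asymmetry index is always at most $2$. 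I expect this second case, rather than the algebraic reduction, to be the most delicate step, as it hinges on both the semicontinuity of $F_{\lambda,\varphi}$ and the strict inequality $\lambda<\varphi(-e_d)$.
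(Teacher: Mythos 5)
Your overall route is the same as the paper's: replace $F_{\lambda,\varphi}$ by an unconstrained anisotropic perimeter whose Wulff set is a translate of $W_{\lambda,\varphi}$ (your $\tilde\varphi$ is exactly the paper's $(\varphi_{x_0})_{\lambda'}^{**}$ for $x_0=-ce_d$), apply Figalli--Maggi--Pratelli, and then push the optimal translation into $H$. A first, repairable, inaccuracy: it is not true that every $c\in(-\varphi(e_d),\lambda)$ gives $0\in\operatorname{int}(W_{\lambda,\varphi}+ce_d)$; this requires $-ce_d\in\operatorname{int}W_{\lambda,\varphi}$, and the point of $W_\varphi$ with maximal $x_d=\varphi(e_d)$ need not lie anywhere near the $e_d$-axis. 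For $\lambda<0$ (still admissible) one can have $\sup\{t:te_d\in W_\varphi\}<-\lambda$, e.g.\ when $W_\varphi$ is a simplex whose top vertex is far off the axis, and then \emph{no} admissible $c$ exists, so $\tilde\varphi$ fails to be coercive and the quantitative isoperimetric inequality does not apply. The cure is the paper's Lemma~\ref{lem:lambda positive}: tilt by a general $x_0\in\operatorname{int}W_\varphi$ with $x_0\cdot e_d>-\lambda$ (such $x_0$ exists since $\max\{x\cdot e_d:x\in W_\varphi\}=\varphi(e_d)>-\lambda$); your Gauss--Green cancellation works verbatim with $c\nu_d$ replaced by $-x_0\cdot\nu$.

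The serious gap is your Case $s>0$. You claim that if $|E\triangle(W_{\lambda,\varphi}(v)+\tau^*)|$ is small enough, then $L^1$-lower semicontinuity upgrades the identity $F_{\lambda,\varphi}(W_{\lambda,\varphi}(v)+\tau^*)=F_{\lambda,\varphi}(W_{\lambda,\varphi}(v))+(\varphi(-e_d)-\lambda)\,\mathcal{H}^{d-1}(\text{bottom face})$ to a constant lower bound on the deficit of $E$. This cannot be made uniform in $s$: as $s\to0^+$ the floating shapes converge in $L^1$ to $W_{\lambda,\varphi}(v)$ itself, so semicontinuity only yields $F_{\lambda,\varphi}(E)\gtrsim F_{\lambda,\varphi}(W_{\lambda,\varphi}(v))$, with no modulus and no extra $\delta_0$. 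And the dangerous regime really occurs: a set $E$ sitting on the substrate with arbitrarily small deficit can have its optimal unrestricted translate at a small positive height $s$ (perturb $W_{\lambda,\varphi}(v)$ by a thin layer along a large top facet, compensating the volume elsewhere; if the top facet is larger than the bottom one, lifting the comparison shape strictly decreases the asymmetry). For such $E$ your Case-2 conclusion ``deficit bounded below by a constant'' is false, even though the theorem itself holds for them (one should shift the translate down, not invoke a gap). What is missing is a quantitative dichotomy formulated in terms of $E$ rather than of the size of the asymmetry: the paper distinguishes whether $|\{x\in E:x_d<\tau_d\}|\ge c_0\min\{\tau_d,1\}$ --- then the asymmetry already controls $\min\{\tau_d,1\}$ and one shifts down by the triangle inequality, exactly as in your Case $s\le0$ --- or not, in which case one cuts $E$ along a horizontal hyperplane with slice area at most $c_0\min\{1,\tau_d^{-1}\}$, bounds the lower part by Remark~\ref{rmk:drying-wetting}\eqref{positive-m}, and compares the upper part with the free Wulff problem to get the uniform gap $F_{\lambda,\varphi}(E)\ge F_{\lambda,\varphi}(W_{\lambda,\varphi}(v))+\delta$ by an explicit choice of $c_0,\delta$, not by compactness. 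Without such an argument your Case $s>0$ does not close.
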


\section{Proof of the main theorems} \label{sec:proofs}

In this section we prove Theorem~\ref{thm:main} and Theorem~\ref{thm:main-quant}. In the following we will make use of various unconstrained auxiliary functionals. If $\psi : \R^n \to \R$ is a homogeneous function of degree one (not necessarily convex or bounded from below), for a set $E \subseteq H^+$ of finite perimeter we define
\begin{align*}%\label{def:H}
P_{\psi}(E) = \int_{\partial^*E} \psi(\nu)\,\,\mathrm{d}\mathcal{H}^{d-1}\,.
\end{align*}
Now suppose $\varphi_\lambda \colon \mathbb{R}^d \to \mathbb{R}$ is given by
\begin{align*}%\label{def:varphilambda}
\varphi_\lambda(\nu) =\begin{cases}  \lambda t  &\text{if }  \nu=-t e_d \text{ with } t > 0\,,\\   
\varphi(\nu) &\text{otherwise.}
\end{cases}
\end{align*}

\begin{lemma}\label{lem:auxiliary} Let $- \varphi(e_d) < \lambda < \varphi(-e_d)$. Then,
\begin{align*}
P_{\varphi_\lambda}(E) \leq F_{\lambda,\varphi}(E) 
\end{align*}
for all sets of finite perimeter $E \subseteq H^+$ with equality if and only if
\begin{align*}
\mathcal{H}^{d-1}\left(\{x \in \partial^* E \colon \nu_E(x) =-e_d\}\setminus H\right) =0\,.
\end{align*}
In particular, $W_{\lambda,\varphi}(1) \subsetneq W_\varphi(1)$ and 
\begin{align*}
P_{\varphi_\lambda}(W_{\lambda,\varphi}(1)) = F_{\lambda,\varphi}(W_{\lambda,\varphi}(1))\,. 
\end{align*}
\end{lemma}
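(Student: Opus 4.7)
The plan is to split the reduced boundary of an admissible $E$ along the substrate $H$ and compare the two integrands $\varphi_\lambda$ and $\varphi$ piece by piece. First I would observe that, since $E \subseteq H^+$, for $\mathcal{H}^{d-1}$-a.e.\ point $x \in \partial^* E \cap H$ the generalised outer normal equals $-e_d$: at such a point $E$ has density zero on the lower side of $H$ (because $|E \setminus H^+|=0$), so the blow-up characterisation of reduced-boundary points forces $\nu_E(x) = -e_d$. Consequently $\varphi_\lambda(\nu_E(x)) = \varphi_\lambda(-e_d) = \lambda$ on this trace, and this piece contributes exactly $\lambda\mathcal{H}^{d-1}(\partial^* E \cap H)$ to $P_{\varphi_\lambda}(E)$, matching the substrate term of $F_{\lambda,\varphi}(E)$.

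Next, on $\partial^* E \cap H^+$ the integrand $\varphi_\lambda$ agrees with $\varphi$ except on the set where $\nu_E$ is a negative multiple of $e_d$; since $|\nu_E|=1$ on the reduced boundary, this is just the set $\{\nu_E = -e_d\}$, on which $\varphi_\lambda(-e_d) = \lambda < \varphi(-e_d) = \varphi(\nu_E)$ by hypothesis. Adding the two pieces gives
\begin{equation*}
P_{\varphi_\lambda}(E) = \int_{\partial^* E \cap H^+} \varphi_\lambda(\nu_E)\,\mathrm{d}\mathcal{H}^{d-1} + \lambda\mathcal{H}^{d-1}(\partial^* E \cap H) \leq F_{\lambda,\varphi}(E),
\end{equation*}
with equality if and only if $\mathcal{H}^{d-1}(\{x \in \partial^* E \cap H^+ : \nu_E(x) = -e_d\}) = 0$. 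Since $\partial^* E \subseteq \overline{H^+}$, this is exactly the stated condition $\mathcal{H}^{d-1}(\{\nu_E = -e_d\} \setminus H) = 0$.

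For the "in particular" statement, the strict inclusion $W_{\lambda,\varphi}(1) \subsetneq W_\varphi(1)$ is elementary: the defining inequality of $W_\varphi$ with $\nu = -e_d$ gives $W_\varphi \subseteq \{x_d \geq -\varphi(-e_d)\}$, and $\lambda < \varphi(-e_d)$ means $-\lambda > -\varphi(-e_d)$, so the half-space $\{x_d \geq -\lambda\}$ strictly cuts $W_\varphi$, while $\lambda > -\varphi(e_d)$ keeps the Winterbottom set non-empty. To check $P_{\varphi_\lambda}(W_{\lambda,\varphi}(1)) = F_{\lambda,\varphi}(W_{\lambda,\varphi}(1))$, I would apply the equality criterion just proved: every point $x \in \partial^* W_{\lambda,\varphi}(1) \cap H^+$ corresponds, after undoing the translation by $\lambda e_d$ and the isotropic rescaling, to a point $x' \in \partial W_\varphi$ with $x'_d > -\lambda$, at which the outer normal agrees with that of $W_\varphi$ (since the two sets coincide locally above the cut plane) and so lies in $\mathcal{N}_{W_\varphi}(x')$. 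If that normal were $-e_d$, Proposition~\ref{prop:basic-Wulff}\eqref{eq-normal-cone} combined with $\varphi^{**} = \varphi$ would force $x'_d = -\varphi(-e_d) < -\lambda$, a contradiction. The main obstacle in executing this plan is precisely this last step: ruling out vertical downward normals on the non-flat portion of $\partial W_{\lambda,\varphi}(1)$ really relies on the normal-cone characterisation in Proposition~\ref{prop:basic-Wulff}, whereas the remainder of the argument is essentially a bookkeeping decomposition of the perimeter measure.
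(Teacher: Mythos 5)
Your proposal is correct and follows essentially the same route as the paper: the inequality and the equality criterion come from decomposing $\partial^* E$ along $H$ (where $\nu_E=-e_d$ $\mathcal{H}^{d-1}$-a.e.\ since $E\subseteq H^+$) and comparing $\varphi_\lambda$ with $\varphi$ pointwise, and the equality for the Winterbottom shape is obtained by excluding downward normals above the cut plane via the normal-cone identity of Proposition~\ref{prop:basic-Wulff}\eqref{eq-normal-cone}. The only (harmless) organizational difference is that you derive the contradiction directly at the offending boundary point from $x'_d=-\varphi(-e_d)<-\lambda$, whereas the paper first shows $W_\varphi\cap\{x_d=-\lambda\}\neq\emptyset$ and argues that a normal $-e_d$ at a point of positive height would force the whole convex set to lie strictly above $H$.
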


\begin{proof} The first two statements are direct from the definition of $ P_{\varphi_\lambda}$. As $W_{\lambda,\varphi}(1)$ is convex and $W_{\lambda,\varphi}(1) \subseteq \{x_d\geq 0\}$, it suffices to show $ W_{\lambda,\varphi}(1) \cap H \neq \emptyset$, or equivalently $W_\varphi \cap \{x_d=-\lambda\}\neq \emptyset$. In fact, if there exists $x \in \partial W_{\lambda,\varphi}(1)$ such that $x_d >0$ and $-e_d=\nu(x) \in \mathcal{N}_{W_{\lambda,\varphi}(1)}(x)$, then
\begin{align*}
-x_d = x \cdot(-e_d) \geq y\cdot (-e_d) = -y_d \quad \forall y \in W_{\lambda,\varphi}(1) \iff y_d \geq x_d >0 \quad \forall y \in W_{\lambda,\varphi}(1)\,.
\end{align*} 
This contradicts $W_{\lambda,\varphi}(1) \cap H \neq \emptyset$.  In order to see that $W_\varphi \cap \{x_d=-\lambda\}\neq \emptyset$ we choose $x_+, x_- \in W_\varphi$ such that $\pm e_d \in \mathcal{N}_{W_\varphi}(x_\pm)$. Then, by Proposition~\ref{prop:basic-Wulff}\eqref{eq-normal-cone} we have $(x_\pm)_d = \pm x_\pm \cdot (\pm e_d) = \pm \varphi(\pm e_d)$. Thus by convexity of $W_\varphi$ there exists $x \in W_\varphi$ such that $x_d = - \lambda$.
\end{proof}

\begin{lemma}\label{lem:optimality} Let $0 < \lambda < \varphi(-e_d)$. Then
\begin{align*}
P_{\varphi_\lambda^{**}}(W_{\lambda,\varphi}(1)) = P_{\varphi_\lambda}(W_{\lambda,\varphi}(1))\leq P_{\varphi_\lambda}(E)
\end{align*}
for all sets of  finite perimeter $E \subseteq H^+$ such that $|E| =1$. Moreover, equality holds if and only if there exists $\tau \in H^+ \cup H$ such that $|E \Delta (W_{\lambda,\varphi}(1) +\tau)|=0$.
\end{lemma}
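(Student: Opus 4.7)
The plan is to compare $P_{\varphi_\lambda}$ with the relaxed surface energy $P_{\varphi_\lambda^{**}}$ induced by the convex envelope of $\varphi_\lambda$ and then invoke the classical anisotropic isoperimetric inequality for the latter. Although $\varphi_\lambda$ itself is not convex (since $\lambda < \varphi(-e_d)$), its biconjugate $\varphi_\lambda^{**}$ is convex, positively $1$-homogeneous, continuous, and bounded below by $\min(c,\lambda)\,\|\cdot\| > 0$; here the positivity hypothesis $\lambda > 0$ is exactly what preserves the coercivity \eqref{ineq:bounded below} after relaxation.

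First I identify the relevant Wulff sets. The defining inequality $\nu \cdot x \le \varphi_\lambda(\nu)$ at $\nu = -t e_d$, $t>0$, reads $x_d \ge -\lambda$; since $\lambda < \varphi(-e_d)$ this already implies the analogous constraint for $\varphi$, and all remaining constraints coincide with those defining $W_\varphi$. Hence $W_{\varphi_\lambda} = W_\varphi \cap \{x_d \ge -\lambda\} = W_{\lambda,\varphi}$, and by Proposition~\ref{prop:basic-Wulff}\eqref{W-conv-env} also $W_{\varphi_\lambda^{**}} = W_{\lambda,\varphi}$. The anisotropic isoperimetric inequality for the convex integrand $\varphi_\lambda^{**}$ (see, e.g., \cite{Fonseca,FonsecaMueller}) therefore yields
\begin{equation*}
P_{\varphi_\lambda}(E) \;\ge\; P_{\varphi_\lambda^{**}}(E) \;\ge\; P_{\varphi_\lambda^{**}}(W_{\lambda,\varphi}(1))
\end{equation*}
for every set of finite perimeter $E \subseteq \mathbb{R}^d$ with $|E| = 1$, the first inequality from $\varphi_\lambda^{**} \le \varphi_\lambda$ and the second being tight only if $E$ coincides with a translate of $W_{\lambda,\varphi}(1)$ up to a null set.

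The technical heart of the argument---and what I expect to be the main obstacle---is to upgrade this to the sharp bound $P_{\varphi_\lambda}(W_{\lambda,\varphi}(1))$ by verifying the identity $P_{\varphi_\lambda^{**}}(W_{\lambda,\varphi}(1)) = P_{\varphi_\lambda}(W_{\lambda,\varphi}(1))$. My plan is to split $\partial^* W_{\lambda,\varphi}(1)$ into the relative interior of its base (the portion lying in $H$, with outer normal $-e_d$) and its complement, which lies on a translate of $\partial W_\varphi$. On the base, Proposition~\ref{prop:basic-Wulff}\eqref{eq-normal-cone} applied to $\varphi_\lambda$ gives $\varphi_\lambda^{**}(-e_d) = x \cdot (-e_d) = \lambda = \varphi_\lambda(-e_d)$. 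On the complement, $\mathcal{H}^{d-1}$-a.e.\ point admits a unique outer normal $\nu$ that is simultaneously a normal to $W_\varphi$ at the corresponding point of $\partial W_\varphi$; the assumption $\lambda < \varphi(-e_d)$ together with Proposition~\ref{prop:basic-Wulff}\eqref{eq-normal-cone} for $\varphi$ excludes $\nu$ being a negative multiple of $e_d$ on this part (otherwise the point would lie at height $-\varphi(-e_d) < -\lambda$, contradicting $x_d \ge 0$), so $\varphi_\lambda(\nu) = \varphi(\nu)$. Applying Proposition~\ref{prop:basic-Wulff}\eqref{eq-normal-cone} now to both $\varphi_\lambda$ and $\varphi$ at this point yields $\varphi_\lambda^{**}(\nu) = (x - \lambda e_d)\cdot \nu = \varphi(\nu)$. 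Hence the two integrands agree $\mathcal{H}^{d-1}$-a.e.\ on $\partial^* W_{\lambda,\varphi}(1)$ and the perimeters coincide.

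For the equality statement, equality in the above chain of inequalities forces $|E \Delta (W_{\lambda,\varphi}(1) + \tau)| = 0$ for some $\tau \in \mathbb{R}^d$ by the rigidity half of the isoperimetric inequality. The constraint $E \subseteq H^+$, combined with the fact that $W_{\lambda,\varphi}(1)$ already touches $H$ along its base (Lemma~\ref{lem:auxiliary}), then restricts $\tau$ to $\tau_d \ge 0$, i.e., $\tau \in H^+ \cup H$.
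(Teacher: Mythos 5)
Your proposal is correct and follows essentially the same route as the paper's proof: identify $W_{\varphi_\lambda}=W_{\lambda,\varphi}$ and hence, via Proposition~\ref{prop:basic-Wulff}\eqref{W-conv-env}, $W_{\varphi_\lambda^{**}}=W_{\lambda,\varphi}$; show $\varphi_\lambda^{**}(\nu)=\varphi_\lambda(\nu)$ $\mathcal{H}^{d-1}$-a.e.\ on the boundary by applying Proposition~\ref{prop:basic-Wulff}\eqref{eq-normal-cone} separately on the base (normal $-e_d$) and on the remaining part (where the $-e_d$ normal is excluded since $\lambda<\varphi(-e_d)$); then conclude with the unconstrained Wulff theorem for $\varphi_\lambda^{**}$, the bound $\varphi_\lambda^{**}\le\varphi_\lambda$, and its uniqueness statement together with $E\subseteq H^+$ forcing $\tau\in H^+\cup H$. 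The only (harmless) blemish is some bookkeeping slippage between $W_{\lambda,\varphi}$ and its translate/rescaling $W_{\lambda,\varphi}(1)$ when you write identities like $\varphi_\lambda^{**}(-e_d)=x\cdot(-e_d)=\lambda$, which should be evaluated at base points of $W_{\lambda,\varphi}$ (height $-\lambda$) and then transferred by translation invariance of the normal field.
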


\begin{proof}[Proof of Lemma \ref{lem:optimality}]
 First note that the argument in the preceding proof shows that for $x \in \partial^* W_{\lambda,\varphi}$
\begin{align}\label{eq:WB-normals}
 \nu_{W_{\lambda,\varphi}}(x) = 
 \begin{cases}
   \nu_{W_{\varphi}}(x)\ne -e_d & \mbox{if } x_d > - \lambda\,,\\ 
   - e_d & \mbox{if } x_d = - \lambda\,. 
 \end{cases}
\end{align}
\EEE Since $0 < \lambda < \varphi(-e_d)$ we have that $\varphi_\lambda$ is bounded from below and that $W_{\lambda,\varphi} = W_{\varphi_\lambda}$ 
is the Wulff set of $\varphi_\lambda$.  By Proposition~\ref{prop:basic-Wulff}\eqref{W-conv-env} we therefore have 
\begin{align}\label{eq:relaxed-i}
  W_{\lambda,\varphi} = W_{\varphi_\lambda^{**}}\,.                                                                                                             
\end{align}
 Let $x \in \partial^* W_{\lambda,\varphi} = \partial^* W_{\varphi_\lambda^{**}}$. If $x_d > -\lambda$, we apply Proposition~\ref{prop:basic-Wulff}\eqref{eq-normal-cone} to both $\varphi_\lambda^{**}$ and $\varphi$ and get 
$$ \varphi_\lambda^{**}(\nu_{W_{\lambda,\varphi}}(x)) 
   = x \cdot \nu_{W_{\lambda,\varphi}}(x)
   = \varphi(\nu_{W_{\lambda,\varphi}}(x)) 
   = \varphi_\lambda(\nu_{W_{\lambda,\varphi}}(x))\,, $$ 
by \eqref{eq:WB-normals}. If $x_d = - \lambda$, we apply Proposition~\ref{prop:basic-Wulff}\eqref{eq-normal-cone} to $\varphi_\lambda^{**}$ and use \eqref{eq:WB-normals} to obtain 
$$ \varphi_\lambda^{**}(\nu_{W_{\lambda,\varphi}}(x)) 
   = x \cdot (-e_d) 
   = \lambda\, 
   = \varphi_\lambda(\nu_{W_{\lambda,\varphi}}(x)), $$ 
too. \EEE It follows that 
\begin{align}\label{eq:relaxed-ii}
P_{\varphi_\lambda^{**}}(W_{\lambda,\varphi}) = P_{\varphi_\lambda}(W_{\lambda,\varphi})\,. 
\end{align}
We can now conclude by referring to known results \cite{Taylor,Fonseca,FonsecaMueller} on the unconstrained functional $P_{\varphi_\lambda^{**}}$ as, by  \eqref{eq:relaxed-ii} and \eqref{eq:relaxed-i}, for any set $E \subseteq H^+$ of finite perimeter with $|E|=1$ we have 
\begin{align*}
  P_{\varphi_\lambda}(W_{\lambda,\varphi}(1)) 
  = P_{\varphi^{**}_\lambda}(W_{\lambda,\varphi}(1)) 
  = P_{\varphi^{**}_\lambda}(|W_{\varphi_\lambda^{**}}|^{-1/d}W_{\varphi_\lambda^{**}}) 
  \leq P_{\varphi^{**}_\lambda}(E) \leq P_{\varphi_\lambda}(E)
\end{align*}
with equality only if $E = W_{\lambda,\varphi}(1) +\hat{\tau}$ such that $E \subseteq H^+$, i.e. $E= W_{\lambda,\varphi}(1) + \tau$ with $\tau \in H^+ \cup H$.
\end{proof}

The following change of coordinates allows to reduce to the case $\lambda > 0$. 
Given  $x_0 \in \mathrm{int}\, (W_\varphi)$ we define
\begin{align*}%\label{def:varphiprime}
\varphi_{x_0}(\nu) = \varphi(\nu) - x_0\cdot\nu\,.
\end{align*}

\begin{lemma} \label{lem:lambda positive} Let $- \varphi(e_d) < \lambda < \varphi(-e_d)$. For all $x_0 \in \mathrm{int}(W_\varphi)$ there exists $\varepsilon = \varepsilon(x_0)>0$ such that  
\begin{align}\label{eqlem:epscoercivity}
\varphi_{x_0}(\nu) \geq \varepsilon\|\nu\| \quad \text{for all } \nu \in \mathbb{R}^d\,.
\end{align}
Furthermore, there exists $x_0 \in \mathrm{int}(W_\varphi)$ such that for $\lambda' = \lambda +x_0 \cdot e_d$ we have that 
\begin{align}\label{eqlem:equality}
{\rm (i)}~ 0<\lambda' <\varphi_{x_0}(-e_d)\, \quad \text{and} \quad{\rm (ii)}~ P_{(\varphi_{x_0})_{\lambda'}}(E) = P_{\varphi_\lambda}(E)\,.
\end{align}
In addition, we have 
\begin{align}\label{equiv:wulf}
{\rm (i)}~x \in W_\varphi \iff x-x_0 \in W_{{\varphi}_{x_0}} \quad \text{and} \quad{\rm (ii)}~ W_{\lambda',\varphi_{x_0}}(1) =W_{\lambda,\varphi}(1) + \tau
\end{align}
for $\tau =  |W_{\lambda,\varphi}|^{-1/d}(-x_0 +(x_0 \cdot e_d) e_d) \in H$.
\end{lemma}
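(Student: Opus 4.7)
The plan is to dispose of the structural identity \eqref{equiv:wulf}(i) first and then derive everything else from it via elementary computations. The only step requiring real thought is locating an $x_0 \in \mathrm{int}(W_\varphi)$ whose last coordinate is large enough to make $\lambda' = \lambda + x_0 \cdot e_d$ strictly positive; the remaining assertions are either unfoldings of definitions or one-line applications of the Gauss--Green theorem.

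For \eqref{equiv:wulf}(i), the support-function definition of $W_\varphi$ gives $x - x_0 \in W_{\varphi_{x_0}}$ iff $(x-x_0) \cdot \nu \le \varphi(\nu) - x_0 \cdot \nu$ for every $\nu \in \mathbb{S}^{d-1}$, i.e., iff $x \in W_\varphi$; in particular $W_{\varphi_{x_0}} = W_\varphi - x_0$. The coercivity \eqref{eqlem:epscoercivity} then follows by choosing $r>0$ with $B_r(x_0) \subseteq W_\varphi$ and plugging the test point $x_0 + r\nu$, $\nu \in \mathbb{S}^{d-1}$, into the definition of $W_\varphi$: this gives $\varphi(\nu) - x_0 \cdot \nu \ge r$, and $1$-homogeneity extends the inequality to $\varphi_{x_0}(\nu) \ge r\|\nu\|$ for all $\nu \in \R^d$.

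The main (and only mildly delicate) step is the choice of $x_0$ for \eqref{eqlem:equality}(i). Since $\lambda > -\varphi(e_d)$, Proposition~\ref{prop:basic-Wulff}\eqref{eq-normal-cone} applied at a support point of $W_\varphi$ with outer normal $e_d$ produces $x_+ \in W_\varphi$ with $(x_+)_d = \varphi(e_d) > -\lambda$, so $W_\varphi \cap \{x_d > -\lambda\} \ne \emptyset$. Since $W_\varphi$ is a convex body (so that $\mathrm{int}(W_\varphi)$ is dense in $W_\varphi$) and $\{x_d > -\lambda\}$ is open, we can pick $x_0 \in \mathrm{int}(W_\varphi)$ with $(x_0)_d > -\lambda$. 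Then $\lambda' > 0$, and the upper bound $\lambda' = \lambda + (x_0)_d < \varphi(-e_d) + (x_0)_d = \varphi_{x_0}(-e_d)$ follows immediately from $\lambda < \varphi(-e_d)$.

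For \eqref{eqlem:equality}(ii), a direct case analysis in the definitions of $\varphi_\lambda$ and $(\varphi_{x_0})_{\lambda'}$ shows $(\varphi_{x_0})_{\lambda'}(\nu) = \varphi_\lambda(\nu) - x_0 \cdot \nu$ for every $\nu \in \R^d$: on the ray $\{-te_d : t > 0\}$ the choice $\lambda' = \lambda + (x_0)_d$ is precisely what absorbs the linear correction $-x_0 \cdot (-te_d) = (x_0)_d t$, while off this ray the identity reduces to the definition of $\varphi_{x_0}$. Integrating over $\partial^* E$ and invoking the Gauss--Green identity $\int_{\partial^* E} \nu_E\,\mathrm{d}\mathcal{H}^{d-1} = 0$ used in \eqref{eq:Gauss-Green} yields $P_{(\varphi_{x_0})_{\lambda'}}(E) = P_{\varphi_\lambda}(E)$. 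Finally, \eqref{equiv:wulf}(ii) follows by substituting $y = x + x_0$ in the definition of $W_{\lambda',\varphi_{x_0}}$ to see that $W_{\lambda',\varphi_{x_0}} = W_{\lambda,\varphi} - x_0$, so the normalizing volumes coincide; inserting this into \eqref{def:Winterbottomshape-volume} and collecting the translation terms gives $W_{\lambda',\varphi_{x_0}}(1) = W_{\lambda,\varphi}(1) + \tau$ with $\tau = |W_{\lambda,\varphi}|^{-1/d}(-x_0 + (x_0)_d e_d)$, which evidently satisfies $\tau \cdot e_d = 0$.
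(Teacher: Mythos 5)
Your proposal is correct and follows essentially the same route as the paper: coercivity via a ball $B_r(x_0)\subseteq W_\varphi$ and test points $x_0+r\nu$, existence of $x_0$ from a boundary point of $W_\varphi$ realizing $x\cdot e_d=\varphi(e_d)$ (Proposition~\ref{prop:basic-Wulff}\eqref{eq-normal-cone}) perturbed into the interior, the pointwise identity $(\varphi_{x_0})_{\lambda'}=\varphi_\lambda-x_0\cdot\nu$ plus Gauss--Green for \eqref{eqlem:equality}(ii), and the translation computation $W_{\lambda',\varphi_{x_0}}=W_{\lambda,\varphi}-x_0$ for \eqref{equiv:wulf}. The only (immaterial) difference is that you establish \eqref{equiv:wulf}(i) first and read the coercivity off it, whereas the paper proves coercivity directly.
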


\begin{proof} We first prove \eqref{eqlem:epscoercivity}.  As $\varphi_{x_0}$ is positively $1$-homogeneous, it suffices to prove the claim for $\nu \in \mathbb{S}^{d-1}$. By construction of $W_\varphi$ for all $x \in W_\varphi$ we have $x \cdot \nu \le \varphi(\nu)$ and hence \EEE
\begin{align*}
(x-x_0) \cdot \nu \leq \varphi(\nu) -x_0 \cdot\nu = \varphi_{x_0}(\nu) \quad \text{for all } \nu \in \mathbb{S}^{d-1}\,.
\end{align*}
Since $x_0 \in \mathrm{int}\, (W_\varphi)$ there is $\varepsilon >0$ such that $\overline{B}_\varepsilon(x_0) \subseteq W_\varphi $ and thus choosing $x_{\varepsilon,
\nu}=x_0+\varepsilon \nu \in \overline{B}_\varepsilon(x_0)$ we find   
\begin{align*}
\varepsilon = (x_{\varepsilon,\nu}-x_0) \cdot \nu \leq\varphi_{x_0}(\nu) \quad \text{for all } \nu \in \mathbb{S}^{d-1}\,.
\end{align*}
This shows \eqref{eqlem:epscoercivity}. 
Next, we prove \eqref{eqlem:equality}(i). To see this, observe
\begin{align*}
0 < \lambda' <\varphi_{x_0}(-e_d) \iff -x_0 \cdot e_d < \lambda < \varphi(-e_d)\,.
\end{align*}
By assumption, $\lambda < \varphi(-e_d)$. By choosing an element $\bar{x}$ with maximal last component in the compact set $W_\varphi$ and observing that $e_d \in \mathcal{N}_{W_\varphi}(\bar{x})$, with the help of Proposition~\ref{prop:basic-Wulff}\eqref{eq-normal-cone} we also get 
\begin{align*}
  -\max \{ x_0 \cdot e_d \colon x_0 \in W_\varphi\} 
  = - \bar{x} \cdot e_d
  = -\varphi(e_d)\,.
\end{align*}
Since $\lambda > - \varphi(e_d)$, there exists $x_0 \in \mathrm{int}\, (W_\varphi)$ such that $-x_0 \cdot e_d < \lambda$, too, as claimed. Next, we show \eqref{eqlem:equality}(ii). To this end, note that $(\varphi_{x_0})_{\lambda'}(\nu) = \varphi_\lambda(\nu) - x_0 \cdot\nu$ for all $\nu \in \mathbb{S}^{d-1}$ since 
$$
  (\varphi_{x_0})_{\lambda'}(\nu) 
  = \left\{ 
    \begin{aligned}
       \varphi_{x_0}(\nu) 
      &&=&&  \varphi(\nu) - x_0\cdot\nu 
      &&=&& \varphi_\lambda(\nu) - x_0\cdot\nu
      &&~\text{ if } \nu \neq -e_d\,,\\
       \lambda' 
      &&=&& \lambda - x_0 \cdot (-e_d) 
      &&=&& \varphi_\lambda(\nu) - x_0\cdot\nu
      &&~\text{ if } \nu = -e_d\,. 
   \end{aligned}
\right.
$$
Now, by the Gauss-Green Theorem for sets of finite perimeter, for $E$ a set of finite perimeter we obtain
\begin{align*}
  P_{(\varphi_{x_0})_{\lambda'}}(E) &= \int_{\partial^* E} (\varphi_{x_0})_{\lambda'}(\nu)\,\mathrm{d}\mathcal{H}^{d-1}
  = \int_{\partial^* E}\varphi_\lambda(\nu) - x_0\cdot\nu \,\mathrm{d}\mathcal{H}^{d-1} \\
  &= \int_{\partial^* E} \varphi_\lambda(\nu)\,\mathrm{d}\mathcal{H}^{d-1} - \int_E \mathrm{div}(x_0)\,\mathrm{d}x 
  = \int_{\partial^* E} \varphi_\lambda(\nu)\,\mathrm{d}\mathcal{H}^{d-1}= P_{\varphi_\lambda}(E) \,,
\end{align*}
where we used that $\mathrm{div}(x_0) =0$. To see \eqref{equiv:wulf}(i), it suffices to note that 
\begin{align*} 
  x \in W_\varphi 
  &\iff x \cdot \nu \leq \varphi(\nu) \quad \forall\, \nu \in \mathbb{S}^{d-1}
  \iff (x-x_0)\cdot \nu \leq \varphi(\nu) -x_0\cdot\nu\quad  \forall\, \nu \in \mathbb{S}^{d-1} \\
  &\iff (x-x_0) \cdot\nu \leq \varphi_{x_0}(\nu) \quad \forall\, \nu \in \mathbb{S}^{d-1} 
  \iff x-x_0 \in W_{\varphi_{x_0}}\,. 
\end{align*}
In order to prove \eqref{equiv:wulf}(ii),  Note that 
\begin{align*}
  W_{\lambda',\varphi_{x_0}} 
  &= \{x \in W_{\varphi_{x_0}} \colon x_d \geq -\lambda'\} 
  = \{x \in W_{\varphi_{x_0}} \colon x_d \geq -\lambda-x_0\cdot e_d\} \\ 
  &= \{x \in W_\varphi - x_0 \colon (x+x_0)_d \geq  -\lambda\} 
  = \{x \in W_\varphi \colon x_d \geq - \lambda\} -x_0 = W_{\lambda,\varphi}-x_0\,.
\end{align*}
 Therefore, 
\begin{align*}
  W_{\lambda',\varphi_{x_0}}(1) 
  &= |W_{\lambda',\varphi_{x_0}}|^{-1/d} (W_{\lambda',\varphi_{x_0}} +\lambda' e_d) 
  = |W_{\lambda,\varphi}|^{-1/d} (W_{\lambda,\varphi} - x_0 +(\lambda +x_0 \cdot e_d) e_d) \\
  &= |W_{\lambda,\varphi}|^{-1/d} (W_{\lambda,\varphi} + \lambda \, e_d - x_0 +(x_0 \cdot e_d) e_d) 
  = W_{\lambda,\varphi}(1) + \tau\,, 
\end{align*}
where $\tau =  |W_{\lambda,\varphi}|^{-1/d}(-x_0 +(x_0 \cdot e_d) e_d) \in H$. 
\end{proof}
We are now in position to prove the main Theorems.

\begin{proof}[Proof of Theorem \ref{thm:main}] By Remark \ref{rem:scaling} it suffices to prove the theorem only for $v=1$. To this end, let $E \subseteq H^+$ be such that $|E|=1$. We choose $x_0 \in\mathrm{int}\, (W_{\varphi})$ and $\lambda' = \lambda +x_0 \cdot e_d$ such that the assertions of Lemma \ref{lem:lambda positive} hold true. Using Lemma~\ref{lem:auxiliary},\eqref{eqlem:equality}(ii), and \eqref{equiv:wulf}(ii) we obtain 
\begin{align*}
  F_{\lambda,\varphi}(W_{\lambda,\varphi}(1))
  &= P_{\varphi_\lambda}(W_{\lambda,\varphi}(1)) 
  = P_{\varphi_\lambda}(W_{\lambda',\varphi_{x_0}}(1)) 
  =  P_{(\varphi_{x_0})_{\lambda'}}(W_{\lambda',\varphi_{x_0}}(1))
\end{align*}
and, for any set $E$ of finite perimeter,
\begin{align*}
  F_{\lambda,\varphi}(E)
  &\ge P_{\varphi_\lambda}(E)
  = P_{(\varphi_{x_0})_{\lambda'}}(E) \,.
\end{align*}
Finally applying Lemma \ref{lem:optimality} to $\varphi_{x_0}$ and the positive $\lambda'$ gives $P_{(\varphi_{x_0})_{\lambda'}}(W_{\lambda',\varphi_{x_0}}(1)) \le P_{(\varphi_{x_0})_{\lambda'}}(E)$ if $|E|=1$, and we have shown 
\begin{align*}
  F_{\lambda,\varphi}(W_{\lambda,\varphi}(1))
  \le F_{\lambda,\varphi}(E)
\end{align*}
for such $E$. In case of equality, we also have $P_{(\varphi_{x_0})_{\lambda'}}(W_{\lambda',\varphi_{x_0}}(1)) = P_{(\varphi_{x_0})_{\lambda'}}(E)$ and Lemma \ref{lem:optimality} implies $E = W_{\lambda',\varphi_{x_0}}(1) + \tau' = W_{\lambda,\varphi}(1) + \tau + \tau'$ for $\tau$ as above and some $\tau' \in H^+ \cup H$. As also $ P_{\varphi_\lambda}(E) = F_{\lambda,\varphi}(E)$, necessarily $\tau + \tau' \in H$. This concludes the proof. 
\end{proof}

We are now in position to prove Theorem \ref{thm:main-quant} establishing the stability of the Winterbottom shape.

\begin{proof}[Proof of Theorem \ref{thm:main-quant}] Let $x_0 \in \mathrm{int}(W_\varphi)$ and $\lambda'=\lambda+x_0\cdot e_d$ be such that the assertions of  Lemma~\ref{lem:lambda positive} hold true. By properly rescaling we may without loss of generality assume $|E|=1$. In the following, we set $W = W_{\lambda,\varphi}(1)$ and $W'=W_{\lambda',\varphi_{x_0}}(1)$. We first want to prove that there exists $\tau \in \mathbb{R}^d$ such that
\begin{align}\label{eq:quant-phi-rel}
 | E \triangle (W + \tau) |^2 \le C ( F_{\lambda,\varphi}(E) - F_{\lambda,\varphi}(W))\,. 
\end{align}
 The quantitative isoperimetric inequality \cite[Theorem~1.1]{FigalliMaggiPratelli} for the unconstrained functional $P_{(\varphi_{x_0})_{\lambda'}^{**}}$ yields $C > 0$ such that for a suitable $\tau' \in \R^d$ we have 
\begin{align*}
|E\Delta(W'+\tau')|^2 \leq C\left(P_{(\varphi_{x_0})_{\lambda'}^{**}}(E)-P_{(\varphi_{x_0})_{\lambda'}^{**}}(W')\right) \,.
\end{align*}
 Recalling that, by \eqref{equiv:wulf}(ii), there holds $W=W'+\hat{\tau}$ for some $\hat{\tau} \in H$ and setting $\tau = \tau'-\hat{\tau}$, we obtain
\begin{align}\label{ineq:intermediatequant-phi-rel}
|E\Delta(W+\tau)|^2=|E\Delta(W'+\tau')|^2\leq C\left(P_{(\varphi_{x_0})_{\lambda'}^{**}}(E)-P_{(\varphi_{x_0})_{\lambda'}^{**}}(W')\right) \,.
\end{align}
 Now,  by Lemma~\ref{lem:auxiliary},  Lemma~\ref{lem:lambda positive}, and, as $\lambda'>0$, by Lemma~\ref{lem:optimality} we have
\begin{align*}
P_{(\varphi_{x_0})_{\lambda'}^{**}}(E) \leq P_{(\varphi_{x_0})_{\lambda'}}(E)=P_{\varphi_{\lambda}}(E) \le F_{\lambda,\varphi}(E)
\end{align*} 
and   
\begin{align*}
P_{(\varphi_{x_0})_{\lambda'}^{**}}(W') = P_{(\varphi_{x_0})_{\lambda'}}(W')=P_{\varphi_{\lambda}}(W')=P_{\varphi_{\lambda}}(W) =F_{\lambda,\varphi}(W)\,,
\end{align*}
where we again used that $W= W'+\hat{\tau}$ with $\hat{\tau} \in H$. Therefore
\begin{align*}
P_{(\varphi_{x_0})_{\lambda'}^{**}}(E)-P_{(\varphi_{x_0})_{\lambda'}^{**}}(W') \leq  F_{\lambda,\varphi}(E)-F_{\lambda,\varphi}(W)\,.
\end{align*}
 Together with \eqref{ineq:intermediatequant-phi-rel}, this yields \eqref{eq:quant-phi-rel}.

The challenge that remains is to show that $\tau$ can be chosen in $H$, i.e., such that $\tau_d = 0$. In order to do that we distinguish cases. 
\smallskip

\noindent{\em Case 1.} Assume \eqref{eq:quant-phi-rel} holds true with $\tau_d < 0$. By convexity of $W$ we have 
\begin{align*}
  |E \triangle (W + \tau)| 
  \ge |\{x \in \tau + W \colon x_d < 0\}| 
  = |\{x \in W \colon x_d < - \tau_d\}| \ge  c \min\{ -\tau_d ,1\} 
\end{align*}
for a constant $c > 0$. Since also 
\begin{align*}
  |(W + \tau) \triangle (W + \tau - \tau_d e_d)| 
  \le  C \min\{ -\tau_d ,1\} \, 
\end{align*}
(note that trivially $|(W + \tau) \triangle (W + \tau - \tau_d e_d)| \le 2|W| \le C$), we find that 
\begin{align*}
  |E \triangle (W + \tau - \tau_d e_d)| 
  \le |E \triangle (W + \tau)| + |(W + \tau) \triangle (W + \tau - \tau_d e_d)| 
  = (1 + C c^{-1}) |E \triangle (W + \tau)|\,. 
\end{align*}
This implies the claim in view of \eqref{eq:quant-phi-rel}. 
\smallskip 

\noindent {\em Case 2.} Assume \eqref{eq:quant-phi-rel} holds true with $\tau_d > 0$. In the following we denote by 
$$ W_\varphi(v) 
   = \left(\frac{v}{|W_{\varphi}|}\right)^{\frac{1}{d}} W_{\varphi} $$ 
the rescaled Wulff set of $\varphi$ with volume $v > 0$. Since in the partial drying/wetting regime $W_\varphi(1)$ (placed in $H^+$) is not optimal for $F_{\lambda, \varphi}$, we have $P_\varphi(W_{\varphi}(1)) > F_{\lambda,\varphi}(W_{\lambda,\varphi}(1))$ and so we can define a positive constant $c_0$ by asking that 
\begin{align*} 
  \left( 1 - c_0 \right)^{\frac{d-1}{d}} P_\varphi(W_\varphi(1)) - (\varphi(e_d) + \varphi(-e_d)) c_0 
  = F_{\lambda,\varphi}(W_{\lambda,\varphi}(1)) + \delta\,, 
\end{align*}
where $\delta := \frac{1}{2} (P_\varphi(W_\varphi(1)) - F_{\lambda,\varphi}(W_{\lambda,\varphi}(1)) > 0$.
\smallskip 

\noindent {\em Case 2a.} Suppose that $|\{ x \in E \colon x_d < \tau_d \}| \ge c_0 \min\{\tau_d,1\}$. Then
\begin{align*}
  |E \triangle (W + \tau)| 
  \ge c_0 \min\{\tau_d,1\}  
\end{align*}
and, similarly as above, from 
\begin{align*}
  |(W + \tau) \triangle (W + \tau - \tau_d e_d)| 
  \le C \min\{\tau_d,1\}, 
\end{align*}
we conclude 
\begin{align*}
  |E \triangle (W + \tau - \tau_d e_d)| 
  \le |E \triangle (W + \tau)| + |(W + \tau) \triangle (W + \tau - \tau_d e_d)| 
  = (1 + C c_0^{-1}) |E \triangle (W + \tau)|\,. 
\end{align*}
Again this finishes the proof with the help of 
\eqref{eq:quant-phi-rel}. 
\smallskip 

\noindent {\em Case 2b.} Assume that $|\{ x \in E \colon x_d < \tau_d \}| < c_0 \min\{\tau_d,1\}$. 
Then there is an $\eps \in (0, \tau_d)$ such that 
$$ \mathcal{H}^{d-1}( E \cap (\eps e_d + H) ) \le c_0\min\{1, \tau_d^{-1}\}. $$ 
We cut along this hyperplane and set $\tilde{E} = \{x \in E : x_d > \eps \}$. With this set we then get  
$$ F_{\lambda,\varphi}(E) 
   \ge F_{\lambda,\varphi}(E \setminus \tilde{E}) + P_\varphi(\tilde{E}) - (\varphi(e_d) + \varphi(-e_d)) \mathcal{H}^{d-1}(E \cap (\eps e_d + H)). $$ 
By Remark~\ref{rmk:drying-wetting}\eqref{positive-m}, the first term on the right hand side is non-negative and we get the bound 
\begin{align*}
  F_{\lambda,\varphi}(E) 
  &\ge P_\varphi(\tilde{E}) - (\varphi(e_d) + \varphi(-e_d)) c_0\min\{1,\tau_d^{-1}\} \\ 
  &\ge P_\varphi(W_\varphi(|\tilde{E}|)) - (\varphi(e_d) + \varphi(-e_d)) c_0 \\ 
  &= |\tilde{E}|^{\frac{d-1}{d}} P_\varphi(W_\varphi(1)) - (\varphi(e_d) + \varphi(-e_d)) c_0\,,  
\end{align*}
where in the second step we have used that the Wulff shape $W_\varphi(|\tilde{E}|)$ minimizes the unconstrained functional $P_\varphi$ among sets of volume $|\tilde{E}|$. As by assumption $|\tilde{E}|\ge 1 - c_0$, it follows that 
\begin{align}\label{ineq:energyex}
  F_{\lambda,\varphi}(E) 
  &\ge  \left( 1 - c_0 \right)^{\frac{d-1}{d}} P_\varphi(W_\varphi(1)) - (\varphi(e_d) + \varphi(-e_d)) c_0 
  \ge F_{\lambda,\varphi}(W_{\lambda,\varphi}(1)) + \delta
\end{align}  
by our choice of $c_0$. Thus, for any $C \ge 4\delta^{-1}$ we have even for $\tau = 0$  
\begin{equation*}
  | E \triangle W |^2 
  \le (|E| + |W|)^2 
  = 4 
  \le C ( F_{\lambda,\varphi}(E) - F_{\lambda,\varphi}(W)). \qedhere  
\end{equation*}
\end{proof}

\section*{Acknowledgements} 
The research of LK was supported by the DFG through the Emmy Noether Programme (project number 509436910).  \EEE 

\EEE

\end{document}